\newtheorem{lemma}{Lemma}
\newtheorem{definition}{Definition}
\newtheorem{theorem}{Theorem}
\newtheorem{proposition}{Proposition}
\newtheorem{corollary}{Corollary}
\newtheorem{remark}{Remark}
\def\mmset{{\mathcal B}}
\def\R{\mathbb{R}}
\def\bez{\backslash}
\def\podx{\underline{x}}
\def\nadx{\overline{x}}
\begin{document}

\author{Viorel Nitica}
\address{Department of Mathematics, West Chester University, PA
19383, USA, and Institute of Mathematics, P.O. Box 1-764, Bucharest, Romania}
\email{vnitica@wcupa.edu}

\author{Serge\u{\i} Sergeev}
\address{University of Birmingham,
School of Mathematics, Watson Building, Edgbaston B15 2TT, UK}
\email{sergiej@gmail.com}

\thanks{This research is supported by
NSF grant DMS-0500832 (V. Nitica) as well as  EPSRC grant
RRAH12809 and RFBR grant 08-01-00601 (S. Sergeev)}
\title{An interval version of separation by
semispaces in max-min convexity}

\date{}
\maketitle

\begin{abstract}
In this paper we study separation of a closed box
from a max-min convex set by max-min semispaces.
This can be regarded as an interval extension of the known
separation results. We give a constructive proof of the separation
in the case when the box satisfies a certain condition, and
we show that the separation is never possible when the
condition is not satisfied. We also study the separation
of two max-min convex sets by a box and by a box and a semispace.
\emph{Key Words:} fuzzy algebra; tropical convexity;
interval; separation;
\emph{Mathematics Subject Classification 2000:} Primary 52A01; Secondary:
52A30, 08A72
\end{abstract}

\section{Introduction\label{sec1}}

Consider the set $\mmset=[0,1]$ endowed with the operations $\oplus
=\max ,\wedge =\min$. This is a well-known
distributive lattice, and like any distributive
lattice it can be considered as a semiring equipped
with addition $\oplus$
and multiplication $\otimes:=\wedge$. Importantly, both operations are idempotent,
$a\oplus a=a$ and $a\otimes a=a\wedge a=a$, and closely related to the
order: $a\oplus b=b\Leftrightarrow a\leq b\Leftrightarrow
a\wedge b=a$.
For standard literature on lattices and semirings
see e.g. \cite{Bir:93} and \cite{Gol:00}.

We consider $\mmset^n$, the cartesian product of $n$ copies of $\mmset$, and
equip this cartesian product with the operations of
taking componentwise $\oplus$: $(x\oplus y)_i:=x_i\oplus y_i$ for $x,y\in\mmset^n$ and $i=1,\ldots, n$,
and scalar $\wedge$-multiplication:
$(a\wedge x)_i:=a\wedge x_i$ for $a\in\mmset$, $x\in\mmset^n$ and $i=1,\ldots,n$. Thus
$\mmset^n$ is considered as a semimodule over $\mmset$ \cite{Gol:00}. Alternatively, one may
think in terms of vector lattices \cite{Bir:93}.

A subset $C$ of $\mmset^{n}$ is said to be {\em max-min convex} if the relations
$x,y\in C,\alpha ,\beta \in \mmset,\alpha \oplus \beta
=1$ imply $(\alpha
\wedge x)\oplus (\beta \wedge y)\in C$.

The interest in max-min convexity is motivated by the
study of tropically convex sets, analogously defined over the semiring
$\R_{\max}$, which is the completed set of real numbers $\R\cup\{-\infty\}$ endowed
with operations of idempotent addition $a\oplus b:=\max(a,b)$ and multiplication
$a\otimes b:=a+b$. Constructed in
\cite{Zim-77,Zim-81}, tropical convexity and its lattice-theoretic
generalizations received much attention and rapidly
developed over the last decades
\cite{AGK-09,CGQS-05,DS-04,GK-06,LMS-01,NS-07I,NS-07II}.
Another source of interest comes from the matrix algebra developed over the
max-min semiring, see \cite{Cec-92,Gav:04,Sem-06} and references therein.

In this article we continue the study of max-min convex structures started
in \cite{NS-08I,NS-08II,Nit-09,Nit-Ser}. We are interested in separation
of max-min convex sets by semispaces.

The set
\begin{align}
[x,y]_M &= \{(\alpha \wedge x)\oplus (\beta \wedge y)\in \mmset%
^{n}|\,\alpha ,\beta \in \mmset,\alpha \oplus \beta =1\}=  \notag \\
\ & =\{\max \,(\min (\alpha ,x),\min (\beta ,y))\in \mmset%
^{n}|\,\alpha ,\beta \in \mmset,\max \,(\alpha ,\beta )=1 \},
\label{segm0}
\end{align}
is fundamental for max-min convexity, it is
called the \emph{max-min segment} (or briefly, the \emph{segment})
\emph{joining }$x$\emph{\ and }$y.$ As in the
ordinary convexity in
the real linear space, a set is max-min convex if and only if any two points
are contained in it together with the
max-min segment joining them. The max-min segments have been described
in \cite{NS-08I,Ser-03}.

Other types of convex sets are max-min semispaces, hemispaces,
halfspaces and hyperplanes
\cite{NS-08II,Nit-09,Nit-Ser}.

For $z\in \mmset^{n},$\ we call
a subset $S$ of $\mmset^{n}$\ a \emph{max-min semispace} (or,
briefly, a \emph{semispace}) \emph{at} $z,$ if it is a
maximal (with respect
to set-inclusion) max-min convex set avoiding $z$.
Semispaces come from the abstract convexity, see e.g. \cite{Sin:97}.
One of their main application is in separation results:
the family of semispaces is the
smallest intersectional basis for the family of all
convex sets. We recall that in $\mmset^{n}$
there exist at most $n+1$ semispaces at each point, exactly $n+1$
at each finite point, and each convex set avoiding $z$ is contained in at
least one of those semispaces \cite{NS-08II}.

Another object introduced in abstract convexity is the {\em hemispace}:
this is any convex set whose complement is also convex.
When the system of convex sets satisfies
Pasch axiom, which is the case for max-min convexity
\cite{NS-08I}, a theorem of Kakutani tells us that for any
two nonintersecting convex sets $C_1$ and $C_2$ there exists a hemispace
$H$ containing $C_1$ such that the complement of $H$ contains
$C_2$. In general, the proof of Kakutani theorem is non-constructive
and uses Zorn's Lemma. A constructive proof of this theorem
in max-min convexity is, to the authors' knowledge,
an open problem.

Hyperplanes and halfspaces are defined by linear forms (in our case,
max-min linear). In the max-min case, these sets are in general
unrelated to semispaces and they cannot
separate a point from a max-min convex set \cite{Nit-09,Nit-Ser}.
This is in contrast with very optimistic results
in the tropical convexity and its lattice-theoretic
generalizations \cite{CGQS-05,DS-04,GK-06,GS-08,Zim-77},
which behave like the ordinary convexity in linear spaces in this respect.

\begin{figure}
\centering
\includegraphics[width=12cm]{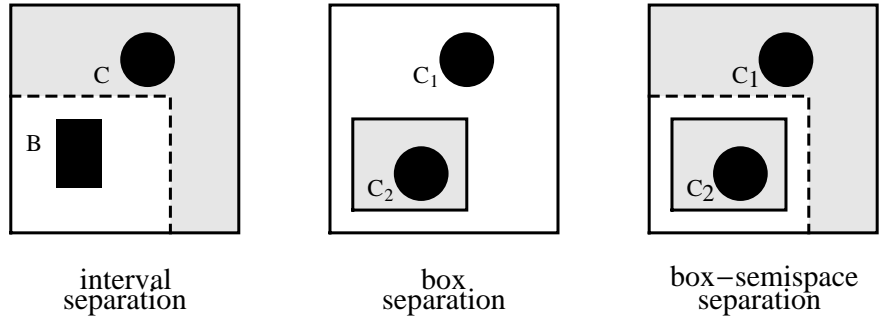}
\caption{Separation types, $n=2$}
\end{figure}

In this paper we study the following interval version of the
semispace separation: given a box $B$, i.e. a Cartesian product of
closed intervals, and a max-min convex set $C$, decide whether it is possible
to construct a semispace which contains $C$ and avoids $B$.
In Section 2 we give our main result, Theorem \ref{interval-sep},
which shows that such separation is indeed
possible when $B$ satisfies
a certain condition. This condition holds true in particular when
$B$ does not contain points with coordinates equal to $1$, or when
$B$ is reduced to a point. When the condition is not satisfied,
we show that the separation by semispaces is never possible.
However, separation can be saved if we also allow hemispaces of a certain kind.
As a corollary of Theorem \ref{interval-sep}, we also recover
the description of semispaces due to Nitica and Singer
\cite{NS-08II}. In Section 3 we study the separation of two convex
sets by a box and by a box and a semispace. We show that this separation is always possible
in $\mmset^2$, and we provide a counterexample in $\mmset^3$.

Figure 1 summarizes the types of separation considered in this paper. The convex sets that need to
be separated are colored in black, and the separating boxes or semispaces are colored in gray.
The sets $C_1,C_2$ and $C$ are convex and $B$ is a box.

The tropical interval linear algebra was introduced in
\cite{LS-01} and \cite[Chapter 6]{intbook:06}.
From this point of view, the present paper may be seen as related to
yet undeveloped area of the interval tropical convexity.

\section{Separation of boxes from max-min convex sets}

For any point $x^0=(x_{1}^{0}, \dots, x_{n}^{0})\in \mmset^n$ we define a family of sets $S_0(x^0),\dots,S_n(x^0)$ in $\mmset^n$. The sets are introduced in \cite[Proposition 4.1]{NS-08II}. Recall that $x^0$ is called {\em finite} if it
has all coordinates different from zeros and ones. Without loss of generality we may assume that:
\begin{equation}
x_{1}^{0}\geq \dots \geq x_{n}^{0}.  \label{decr}
\end{equation}

The set $\{x_{1}^{0},\dots ,x_{n}^{0}\}$
admits a natural subdivision into ordered
subsets such that the elements of each subset are either equal to each other
or are in strictly decreasing order, say
\begin{equation}\label{permut5}
\begin{gathered}
x_{1}^{0}
=\dots =x_{k_{1}}^{0}>\dots >x_{k_{1}+l_{1}+1}^{0}=\dots =x_{k_{1}+l_{1}+k_{2}}^{0}>\dots \\
>x_{k_{1}+l_{1}+
k_{2}+l_{2}+1}^{0}=\dots =x_{k_{1}+l_{1}+k_{2}+l_{2}+k_{3}}^{0}>\dots\\
>x_{k_{1}+l_{1}+\dots +k_{p-1}+l_{p-1}+1}^{0}=\dots =x_{k_{1}+l_{1}+\dots +k_{p-1}+l_{p-1}+k_{p}}^{0}\\
>\dots >x^0_{k_{1}+l_{1}+\dots +k_{p}+l_{p}}(=x^0_n).
\end{gathered}
\end{equation}

Let us introduce the following notations:
\begin{eqnarray}
L_{0} &=&0,K_{1}=k_{1},L_{1}=K_{1}+l_{1}=k_{1}+l_{1},  \label{newnot} \\
K_{j} &=&L_{j-1}+k_{j}=k_{1}+l_{1}+...+k_{j-1}+l_{j-1}+k_{j}\quad
(j=2,...,p),  \label{newnot2} \\
L_{j} &=&K_{j}+l_{j}=k_{1}+l_{1}+...+k_{j}+l_{j}\quad (j=2,...,p);
\label{newnot3}
\end{eqnarray}
we observe that $l_{j}=0$ if and only if $K_{j}=L_{j}.$

We are ready to define the sets. We need to distinguish the cases when the
sequence \eqref{permut5} ends with zeros or begin with ones, since some sets $S_i$ become empty in that case.

\begin{definition}\label{def-sets9}
a) If $x^{0}$ is finite, then:
\begin{equation}
S_{0}(x^0)=\{x\in \mmset^{n}|x_{i}>x_{i}^{0}\text{ for some }1\leq i\leq
n\},  \label{semiunu}
\end{equation}
\begin{equation}\label{semidoi}
\begin{gathered}
S_{K_{j}+q}(x^0)=\{x\in \mmset^{n}|x_{K_{j}+q}<x_{K_{j}+q}^{0},\text{ or }%
x_{i}>x_{i}^{0}\text{ for some }K_{j}+q+1\leq i\leq n\} \\(q=1,...,l_{j};j=1,...,p)\text{ if }l_{j}\neq 0,
\end{gathered}
\end{equation}
\begin{equation}\label{semitrei}
\begin{gathered}
S_{L_{j-1}+q}(x^0)=\{x\in \mmset^{n}|x_{L_{j-1}+q}<x_{L_{j-1}+q}^{0},\text{
or }x_{i}>x_{i}^{0}\text{ for some }K_{j}+1\leq i\leq n\} \\(q=1,...,k_{j};j=1,...,p\text{ if }k_{1}\neq 0,\text{ or }j=2,...,p\text{ if
}k_{1}=0).
\end{gathered}
\end{equation}

b) If there exists an index $i\in \{1,...,n\}$\ such that $x_{i}^{0}=1,$ but no index $j$ such that $x_{j}^{0}=0,$ then the sets are $S_{1},...,S_{n}$ of part a).

c) If there exists an index $j\in \{1,...,n\}$\ such that $x_{j}^{0}=0,$ but no index $i$ such
that $x_{i}^{0}=1,$ then the sets are $S_{0},S_{1},...,S_{\beta -1}$ of part \emph{a)}, where
\begin{equation}
\beta :=\min \{1\leq j\leq n|\;x_{j}^{0}=0 \}.  \label{beta}
\end{equation}

d) If there exist an index $i\in \{1,...,n\}$\ such that $x_{i}^{0}=1,$
and an index $j$ such that $x_{j}^{0}=0,$ then
the sets are $S_{1},...,S_{\beta -1}$ of part \emph{a)},
\emph{\ }where $\beta $ is given by \emph{(\ref{beta})}.
\end{definition}

\begin{proposition}[\cite{NS-08II}]
\label{p:semisp-conv}
For any $x^0\in \mmset^n$ the sets $S_i(x^0), 1\le i\le n,$ are
max-min convex.
\end{proposition}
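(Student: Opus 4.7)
The plan is to rewrite every $S_i(x^0)$ with $1\le i\le n$ in a single template and then establish max-min convexity by a short case analysis that hinges on one strict inequality. Whether $i$ lies in an equal block or a strictly decreasing block of \eqref{permut5}, the sets \eqref{semidoi} and \eqref{semitrei} both take the form
\[
S_i(x^0)=\{x\in\mmset^n\mid x_t<x_t^0 \text{ or } x_j>x_j^0 \text{ for some } j\in T\},
\]
with a distinguished \emph{target} index $t=i$ and a \emph{tail} index set $T\subseteq\{t+1,\dots,n\}$ read off from \eqref{semidoi}--\eqref{semitrei}. The only property I shall need from the subdivision \eqref{permut5} is
\[
x_t^0 \;>\; x_j^0 \quad\text{for every } j\in T,
\]
which holds because in \eqref{semidoi} the tail starts inside a strictly decreasing run at $K_j+q+1$, and in \eqref{semitrei} the tail starts at $K_j+1$, immediately after the equal block whose common value is $x_t^0$, with a mandatory strict drop between consecutive blocks of \eqref{permut5}; subsequent indices can only decrease further by \eqref{decr}.

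Fix $x,y\in S_i(x^0)$ and scalars $\alpha,\beta\in\mmset$ with $\alpha\oplus\beta=1$, and set $z:=(\alpha\wedge x)\oplus(\beta\wedge y)$. Since $\max(\alpha,\beta)=1$, at least one of the two scalars equals $1$; by the symmetry $(x,\alpha)\leftrightarrow(y,\beta)$ I may assume $\alpha=1$, so that $z_k=\max(x_k,\min(\beta,y_k))\ge x_k$ for every coordinate $k$. If $x$ already satisfies the tail condition, say $x_j>x_j^0$ with $j\in T$, then $z_j\ge x_j>x_j^0$ and $z\in S_i(x^0)$. Otherwise $x_j\le x_j^0$ for all $j\in T$, so the membership $x\in S_i(x^0)$ forces the target condition $x_t<x_t^0$.

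Now split on $y$. If $y_t<x_t^0$, then $\min(\beta,y_t)\le y_t<x_t^0$, so $z_t=\max(x_t,\min(\beta,y_t))<x_t^0$ and $z\in S_i(x^0)$. If instead $y_t\ge x_t^0$, then $y\in S_i(x^0)$ forces $y_{j^\ast}>x_{j^\ast}^0$ for some $j^\ast\in T$. Split on $\beta$: if $\beta<x_t^0$, then $z_t\le\max(x_t,\beta)<x_t^0$, and we are done via the target; if $\beta\ge x_t^0$, the strict inequality $x_t^0>x_{j^\ast}^0$ from the first paragraph yields $\beta>x_{j^\ast}^0$, hence $\min(\beta,y_{j^\ast})>x_{j^\ast}^0$ and $z_{j^\ast}>x_{j^\ast}^0$, done via the tail.

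The principal (and honestly only) obstacle I foresee is the bookkeeping needed to match the two families $S_{K_j+q}$ and $S_{L_{j-1}+q}$ of Definition \ref{def-sets9}, together with the extra provisos for parts \emph{b)}--\emph{d)} of that definition, to the single template above and to verify the key strict inequality $x_t^0>x_j^0$ on the tail in each instance; once this structural fact is set up, the convexity proof itself is just the four-way case split outlined above.
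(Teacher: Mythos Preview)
Your argument is correct. The paper itself does not prove this proposition; it is quoted from \cite{NS-08II} (Nitica--Singer), so there is no in-paper proof to compare against. What you have supplied is a valid self-contained proof: the unified template
\[
S_i(x^0)=\{x\in\mmset^n\mid x_t<x_t^0\ \text{or}\ x_j>x_j^0\ \text{for some }j\in T\}
\]
with the strict gap $x_t^0>x_j^0$ for all $j\in T$ is exactly the content of \eqref{semidoi}--\eqref{semitrei}, and your four-way case split (tail hit on $x$; else target on $x$, then split on $y_t$ and on $\beta\gtrless x_t^0$) is the standard and complete verification. The edge case $T=\emptyset$ (i.e.\ $i=n$ at the end of the last strictly decreasing run) is also covered, since then both $x_t<x_t^0$ and $y_t<x_t^0$ are forced and the first sub-case applies. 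The bookkeeping you flag as the ``only obstacle'' is indeed routine once one reads off from \eqref{permut5} that the tail in \eqref{semidoi} begins inside a strictly decreasing run and in \eqref{semitrei} begins just after an equal block; parts b)--d) of Definition~\ref{def-sets9} only restrict which $S_i$ are nonempty and do not affect the argument.
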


In the following $[a,c]$ denotes the ordinary interval on the real line
$\{b\colon a\leq b\leq c\}$, provided $a\leq c$ (and possibly
$a=c$).

We investigate the separation of a box
$B=[\podx_1,\nadx_1]\times\ldots\times[\podx_n,\nadx_n]$
from a max-min convex set $C\subseteq\mmset^n$, by which
we mean that there exists a set $S$ described in
Definition \ref{def-sets9},
which contains $C$ and avoids $B$.

Assume that $\nadx_1\geq\ldots\geq\nadx_n$ and suppose that
$t(B)$ is the greatest integer such that
$\nadx_{t(B)}\geq \podx_i$ for all $1\leq i\leq t(B)$.
We will need the following condition:
\begin{equation}
\label{sep-cond}
\begin{split}
&\text{ If }(\nadx_1=1 )\ \&\ (y_l\geq\podx_l,1\le l\le n)\ \&\
(\nadx_l<y_l\ \text{for some $l\leq t(B)$}),\\
&\text{ then } y\notin C.
\end{split}
\end{equation}
Note that if the box is reduced to a point and if $\nadx_1=1$, then
$\nadx_l=1$ for all $l\leq t(B)$ so that $\nadx_l<y_l$ is
impossible. So \eqref{sep-cond} always
holds true in the case of a point.

The formulation of our main result will also use an {\em
oracle} answering the question, whether or not
a given max-min convex set $C\subseteq\mmset^n$ lies in a semispace
$S$.
As in the conventional convex geometry
or tropical convex geometry, this question
can be answered in $O(mn)$ time if $C$ is a convex hull of
$m$ points. Indeed it suffices to answer whether any of the
inequalities defining
$S$ is satisfied for each of the $m$ points generating $C$.

\begin{theorem}
\label{interval-sep}
Let $B=[\podx_1,\nadx_1]\times\ldots\times[\podx_n,\nadx_n]$,
and let $C\subseteq\mmset^n$ be a max-min convex set avoiding
$B$. Suppose that $B$ and $C$ satisfy \eqref{sep-cond}.
Then there is a set $S$ described by
Definition \ref{def-sets9}, which contains $C$ and avoids $B$.
This set is constructed in no more than $n+1$ calls
to the oracle.
\end{theorem}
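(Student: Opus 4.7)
The plan is to construct at most $n+1$ candidate semispaces from Definition \ref{def-sets9}, each provably avoiding $B$, and use the oracle to locate one that contains $C$. Existence is shown by contradiction: failure to find such a semispace produces, via a max-min convex combination of witness points, a point of $B \cap C$, violating $C \cap B = \emptyset$.

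Assume WLOG $\nadx_1 \geq \cdots \geq \nadx_n$. For each $p \in \{0, 1, \ldots, n\}$ I would consider a candidate of the form
\[ S^p = \begin{cases} \{y : y_i > \nadx_i \text{ for some } i\} & \text{if } p = 0, \\ \{y : y_p < \podx_p\} \cup \{y : y_i > \nadx_i \text{ for some } i > p\} & \text{if } 1 \leq p \leq n, \end{cases} \]
and identify it with a set of Definition \ref{def-sets9} arising from the base point $x^{0,p}$ with $x^{0,p}_p = \podx_p$ and $x^{0,p}_i = \nadx_i$ for $i \neq p$, after re-sorting coordinates into decreasing order as required. Each $S^p$ avoids $B$ by inspection, since every $y \in B$ satisfies $y_p \geq \podx_p$ and $y_i \leq \nadx_i$. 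The oracle is then called on each $S^p$, for a total of at most $n+1$ queries.

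The crux is to show some $S^p$ contains $C$. Suppose not; pick witnesses $y^p \in C \setminus S^p$. Then $y^0_i \leq \nadx_i$ for all $i$, and for $p \geq 1$ we have $y^p_p \geq \podx_p$ together with $y^p_i \leq \nadx_i$ for all $i > p$. Form
\[ y^* = y^0 \oplus \bigoplus_{p=1}^{n} (\podx_p \wedge y^p); \]
the coefficients sum to $1$ under $\oplus$ via $\alpha_0 = 1$, so $y^* \in C$ by max-min convexity. Coordinate-wise: $y^*_j \geq \podx_j$ since the $p=j$ term equals $\podx_j$ (using $y^j_j \geq \podx_j$), and $y^*_j \leq \nadx_j$ since $y^0_j \leq \nadx_j$, $y^p_j \leq \nadx_j$ for $1 \leq p < j$ (by the semispace property of $y^p$), and $\podx_p \leq \nadx_p \leq \nadx_j$ for $p \geq j$ (by the sorting). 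Hence $y^* \in B \cap C$, contradicting $C \cap B = \emptyset$.

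The main obstacle is the case $\nadx_1 = 1$. Here parts b) and d) of Definition \ref{def-sets9} strip $S^0$ from the family, so only $S^1, \ldots, S^n$ remain and the combination loses its $y^0$ contribution --- without which $\bigoplus_{p \geq 1} \podx_p$ need not reach $1$. Condition \eqref{sep-cond} is what rescues the argument: it precisely excludes from $C$ those points which would play the role of $y^0$ substitutes (those with $y_l \geq \podx_l$ for all $l$ and $\nadx_l < y_l$ for some $l \leq t(B)$), which allows one of the $y^p$ (for $p \leq t(B)$) to be used at coefficient $1$ while still satisfying the upper bounds $y^p_l \leq \nadx_l$ throughout. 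A secondary book-keeping obstacle is reconciling the candidate $S^p$ with Definition \ref{def-sets9} when $\podx_p < \nadx_{p+1}$ forces a non-trivial re-sort of $x^{0,p}$, or when degenerate values $\podx_p = 0$ or $\nadx_p = 1$ trigger cases b)--d) of that definition.
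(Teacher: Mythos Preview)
Your one-shot approach is sound in spirit and genuinely different from the paper's iterative scheme, but the ``secondary book-keeping obstacle'' is an actual gap, not bookkeeping. When $\podx_p\le\nadx_{p+1}$ your set
\[
S^p=\{y:y_p<\podx_p\}\cup\{y:y_i>\nadx_i\text{ for some }i>p\}
\]
is \emph{not} any $S_i(x^0)$ from Definition~\ref{def-sets9}: every such set has the shape $\{y:y_i<x^0_i\}\cup\{y:y_j>x^0_j\text{ for some }j\text{ with }x^0_j<x^0_i\}$, so each threshold in the ``or'' clause must lie strictly below the pivot $\podx_p$. Re-sorting the base point only relabels coordinates; it cannot change the set. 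The repair is to shrink to
\[
\tilde S^p=\{y:y_p<\podx_p\}\cup\{y:y_j>\nadx_j\text{ for some }j\text{ with }\nadx_j<\podx_p\},
\]
which \emph{is} a genuine semispace (take $x^0$ with value $\podx_p$ at every coordinate $j$ with $\nadx_j\ge\podx_p$ and value $\nadx_j$ elsewhere). Crucially, your contradiction argument survives this shrinkage: for a witness $y^p\notin\tilde S^p$ and any $j$, either $\nadx_j<\podx_p$ (so $y^p_j\le\nadx_j$), or $\nadx_j\ge\podx_p$ (so $\podx_p\wedge y^p_j\le\podx_p\le\nadx_j$ regardless). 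Your handling of $\nadx_1=1$ is then correct once you take coefficient $1$ on $y^l$ with $\podx_l=\max_{k\le t(B)}\podx_k$ and observe that $\nadx_j<\podx_l$ precisely for $j>t(B)$; the resulting $y^*$ has $y^*_j\ge\podx_j$ everywhere and $y^*_j\le\nadx_j$ for $j>t(B)$, so either $y^*\in B$ or~\eqref{sep-cond} is violated.

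By contrast the paper proceeds iteratively: after an initial oracle call it reorders by $\podx_1\ge\cdots\ge\podx_n$, builds a partition $T_1,\dots,T_m$ of $\{1,\dots,n\}$ with associated levels $a_k$ (engineered so that for $i\in[s_k,s_{k-1})$ every $\nadx_j$ with $j\in T_1\cup\cdots\cup T_{k-1}$ is already below $\podx_i$, guaranteeing semispace validity up front), and then improves a running point $y\in C$ block by block via $z=y\oplus\bigoplus_i a_k\wedge x^i$ until it hits $B$ or triggers~\eqref{sep-cond}. Your corrected argument is more economical and bypasses the $T_k$ machinery; the paper's construction buys an explicit, structured family of candidate semispaces tied to the geometry of the interval endpoints.
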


\begin{proof}
If $\nadx_i<1$ for all $i$, then
we try to separate $B$ from $C$ by $S_0(\nadx_1,\ldots,\nadx_n)$ given by \eqref{semiunu}.
Suppose we fail. Then there
exists $y\in C$ such that $y_i\le\nadx_i$
for all $i$.

Otherwise, $1=\nadx_1\geq\ldots\geq\nadx_n$.
Let $\podx_l=\max\limits_{k\leq t(B)} \podx_k$, and define
$u\in\mmset^n$ by
\begin{equation}
\label{u:def}
u_i=
\begin{cases}
\podx_l, &\text{if $i\leq t(B)$},\\
\nadx_i, &\text{if $i>t(B)$}.
\end{cases}
\end{equation}

It follows from the definition of $t(B)$ that $\podx_l=\max\limits_{1\le i\le n}u_i$. 

We try to separate $B$ from $C$ by $S_l(u)$, which is given by
\eqref{semidoi} or \eqref{semitrei}. If we fail then there
exists $y\in C$ such that $y_i\leq\nadx_i$ for all $i>t(B)$ (and
trivially $y_i\leq\nadx_i$ for $\nadx_i=1$).

Thus we either separate $C$ from $B$, or
there is a point $y\in C$ such that $y_i\leq\nadx_i$ for all
$i>t(B)$. Condition \eqref{sep-cond} and $B\cap C=\emptyset$
assure that there is at least one $i$ such that $y_i<\podx_{i}$.
Indeed, otherwise if $\podx_{i}\le y_i$ for all $i$ and $y_i\leq\nadx_i$ for all
$i\le t(B)$ then $y\in B$; and if $\podx_{i}\le y_i$ for all $i$ and $\nadx_i<y_i$
for some $i\le t(B)$ then $y\not \in C$ by condition \eqref{sep-cond}.

Now assume without loss of
generality that $\podx_1\geq\ldots\geq\podx_n$
(the order of $\nadx_i$ is now arbitrary).

The set $\{1,\ldots,n\}$ is naturally partitioned
by the following procedure. See Figure 2 for an illustration. The segments $[\podx_i,\nadx_i]$ are drawn vertically
and counted from left to right.

\begin{figure}
\centering
\includegraphics[width=10cm]{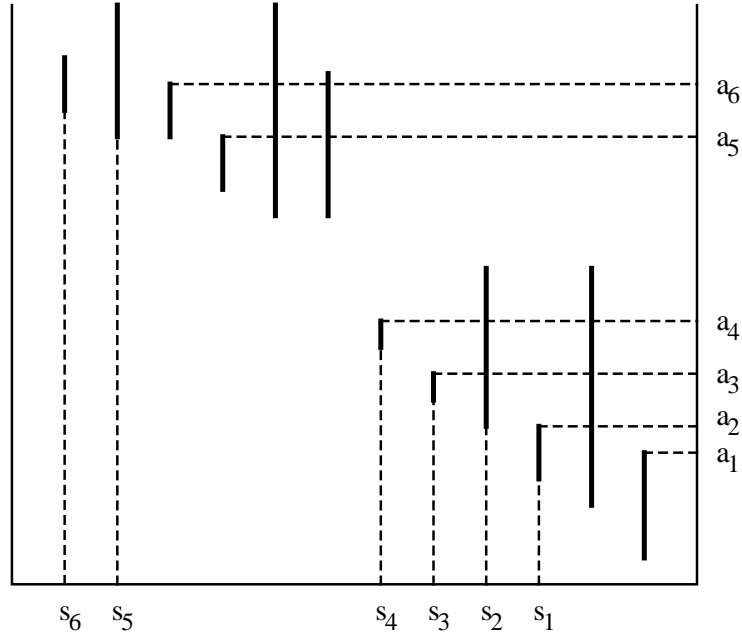}
\caption{The choice of $s_n$}
\end{figure}

Let $s_1$ be the smallest number
such that $\podx_{s_1}\leq \nadx_i$ for all $i=s_1,\ldots,n$.

If $s_1\neq 1$ then there exists
$t_1\in\{s_1,\ldots,n\}$ such that $\podx_{s_1-1}>\nadx_{t_1}$.
In this case let $T_1$ be the set of such $t_1$. Otherwise
if $s_1=1$ we take $T_1=\{1,\ldots,n\}$. In Figure 2 one has $T_1=\{12\}$.

We define
\begin{equation}
\label{a1-def}
a_1=\min\{\nadx_i\colon i\in T_1\}.
\end{equation}
We have
\begin{equation}
\label{a1-props}
\podx_i\leq a_1\leq \nadx_i\quad\forall i=s_1,\ldots, n.
\end{equation}

Thus $a_1$ is a common level in all intervals
$[\podx_{s_1},\nadx_{s_1}],\ldots,[\podx_n,\nadx_n]$, but not
$[\podx_{s_1-1},\nadx_{s_1-1}]$.

If $s_1=1$ then we stop. Otherwise
we proceed by induction. Let $s_k$ be the smallest number such that
$\podx_{s_k}\leq\nadx_i$ for all
$i\in\{s_k,\ldots,n\}\bez T_1\cup\ldots\cup T_{k-1}$.
Note that $s_k<s_{k-1}$. If $s_k\neq 1$ then there exists
$t_k\in\{s_k,\ldots,n\}$ such that
$\podx_{s_k-1}>\nadx_{t_k}$. In this case let $T_k$ be the set of such $t_k$.
Otherwise if $s_k=1$, then define $T_k:=\{1,\ldots,n\}\bez T_1\cup\ldots\cup T_{k-1}$.

We take
\begin{equation}
\label{ak-def}
a_k=\min\{\nadx_i\colon i\in T_k\}.
\end{equation}

We have
\begin{equation}
\label{ak-props}
\podx_i\leq a_k\leq \nadx_i\quad\forall i=\{s_k,\ldots n\}\bez T_1\cup\ldots\cup T_{k-1}.
\end{equation}
Thus $a_k$ is a common level in all intervals
$[\podx_{s_k},\nadx_{s_k}],\ldots,[\podx_n,\nadx_n]$ excluding
the intervals with indices in $T_1\cup\ldots\cup T_{k-1}$ which are below
that level. The interval
$[\podx_{s_k-1},\nadx_{s_k-1}]$ is above $a_k$
(and possibly several other such levels going into $T_k$).

In Figure 2, the sets $T_i$ are $T_1=\{12\},T_2=\{10\},
T_3=\{8\},T_4=\{7,9,11\},T_5=\{4\},T_6=\{1,2,3,5,6\}$.

Next we recall our point $y\in C$. It has
$y_i<\podx_i$ for some $i$. Denote $K=\{i\colon y_i>\nadx_i\}$.
Pick the greatest $i$ such that $y_i<\podx_i$ (note that
for such $i$ we necessarily have $\podx_i>0$),
and let $s_k\leq i< s_{k-1}$, which implies
$\podx_j\leq y_j\leq \nadx_j$ for all $j\in\{s_{k-1},\ldots,n\}\bez K$.
We try to separate
$B$ from $C$ by the sets
\begin{equation}
\label{sipi1r}
S_i(u^i)=
\{x\in\mmset^n\colon x_i<\podx_i\ \text{or}\ x_j>\nadx_j\ \text{for some}\ j\in T_1\cup\ldots\cup T_{k-1}\},
\end{equation}
where $u^i$ can be defined by
\begin{equation}
\label{zi-def}
u_l^i=
\begin{cases}
\podx_l, & l<i,\\
\podx_i, & l\geq i\ \text{and}\ l\notin T_1\cup\ldots\cup T_{k-1},\\
\nadx_l, & l\in T_1\cup\ldots\cup T_{k-1},
\end{cases}
\end{equation}
for all $i$ with $y_i<\podx_i$ and $s_k\leq i<s_{k-1}$. Indeed,
\eqref{sipi1r} is of the form \eqref{semidoi} or \eqref{semitrei},
where $u^i$ is substituted for $x^0$.

Suppose the separation always fails. Then it gives us points $x^i\in C$
such that
\begin{equation}
\label{yi-props}
x_i^i\geq \podx_i\ \text{and}\ x_j^i\leq\nadx_j\
\forall j\in T_1\cup\ldots\cup T_{k-1}.
\end{equation}
Then \eqref{ak-props} implies that
\begin{equation}
\label{akyi-props}
\podx_i\leq a_k\wedge x_i^i\leq\nadx_i\ \text{and}\
a_k\wedge x_j^i\leq \nadx_j\ \forall j=s_k,\ldots,n,
\end{equation}
since $a_k\in[\podx_j,\nadx_j]$ for
$j\in\{s_k,\ldots,n\}\bez T_1\cup\ldots\cup T_{k-1}$ by
\eqref{ak-props}, and we use
\eqref{yi-props} for $j\in T_1\cup\ldots\cup T_{k-1}$.
The point
\begin{equation}
\label{zp-def}
z=\bigoplus_i a_k\wedge x^i\oplus y\in C
\end{equation}
will be in some sense better than $y$. Indeed,
\eqref{akyi-props}
implies that $\podx_i\leq z_i\leq \nadx_i$ for all
$i\in\{s_k,\ldots,n\}\bez K$, versus
$\podx_i\leq y_i\leq \nadx_i$
for all $i=\{s_{k-1},\ldots ,n\}\bez K$. As $z\geq y$ we have
$z_i>\nadx_i$ for all $i\in K$.

Proceeding with this improvement
we obtain a point $z$ which satisfies
$\podx_i\leq z_i$ for all $i$ and $z_i\leq \nadx_i$ for all $i\in\{1,\ldots, n\}\bez K$.
This contradicts either $B\cap C=\emptyset$, or condition
\eqref{sep-cond}.
This contradiction shows that we should
succeed with separation at some stage. Clearly,
the number of calls to the oracle does not exceed
$n+1$.
\end{proof}

We note that Theorem \ref{interval-sep} also yields
a method which verifies condition \eqref{sep-cond} in no
more than $n+1$ calls to the oracle.

The box $B$ can be a point and in this case condition \eqref{sep-cond}
always holds true. Therefore, some known results
on max-min semispaces
\cite{NS-08II} can be deduced from Theorem \ref{interval-sep}.
The following statement is an immediate corollary
of Theorem \ref{interval-sep} and Proposition
\ref{p:semisp-conv}.

\begin{corollary}[\cite{NS-08II}]
\label{ns-08ii}
Let $x\in\mmset^n$ and $C\subseteq\mmset^n$ be a max-min
convex set avoiding $x$. Then $C$ is contained in one $S_i(x), 1\le i\le n,$
as in Definition \ref{def-sets9}. Consequently these
sets are the family of semispaces at $x$.
\end{corollary}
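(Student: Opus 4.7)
The plan is to obtain the corollary as a direct specialization of Theorem \ref{interval-sep} to the degenerate box $B=\{x\}$. As remarked after the statement of condition \eqref{sep-cond}, a one-point box satisfies \eqref{sep-cond} automatically; so for any max-min convex $C$ avoiding $x$, Theorem \ref{interval-sep} supplies an index $i$ appearing in Definition \ref{def-sets9} with $C\subseteq S_i(x)$. Together with Proposition \ref{p:semisp-conv}, which guarantees that each such $S_i(x)$ is max-min convex, this already proves the first sentence of the corollary.

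For the second sentence I would verify in three steps that each $S_i(x)$ is in fact a semispace at $x$, i.e., a maximal max-min convex set avoiding $x$. First, a direct inspection of \eqref{semiunu}--\eqref{semitrei} shows $x\notin S_i(x)$, since at $x$ itself every defining strict inequality collapses to an equality. Second, convexity is Proposition \ref{p:semisp-conv}. Third, for maximality, suppose $D$ is max-min convex, $x\notin D$, and $S_i(x)\subseteq D$; applying the first sentence to $D$ yields an index $j$ with $D\subseteq S_j(x)$, so $S_i(x)\subseteq S_j(x)$.

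To conclude $j=i$, and therefore $D=S_i(x)$, I need the pairwise incomparability under inclusion of the family $\{S_k(x)\}$. This is the single point that Theorem \ref{interval-sep} does not provide for free, and it is the main obstacle of the argument. I would handle it by exhibiting, for every admissible pair $i\neq j$, an explicit witness in $S_i(x)\setminus S_j(x)$: a small downward perturbation of $x$ in coordinate $i$ activates the clause ``$x_i<x_i^0$'' of $S_i(x)$ without activating any defining clause of $S_j(x)$, while upward perturbations in the later coordinates distinguish $S_0(x)$ from the remaining sets. A short case-by-case check built on the block structure \eqref{permut5}--\eqref{newnot3}, covering the four cases (a)--(d) of Definition \ref{def-sets9}, then completes the verification. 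Once maximality is in hand, any semispace at $x$ must coincide with some $S_i(x)$ by the first sentence, so these sets are indeed the full family of semispaces at $x$.
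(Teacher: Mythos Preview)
Your proposal is correct and follows essentially the same route as the paper: specialize Theorem \ref{interval-sep} to $B=\{x\}$, invoke Proposition \ref{p:semisp-conv}, and deduce maximality of each $S_i(x)$ from the containment result together with pairwise incomparability of the $S_i(x)$. The paper's proof is nearly identical, except that it dispatches the incomparability step with the single word ``obviously'' rather than sketching explicit witnesses as you do.
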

\begin{proof}
The proof of Theorem \ref{interval-sep} applied to $B=\{x\}$ shows
that any max-min convex set avoiding $x$ is contained
in one of the sets $S_i(x)$. Proposition \ref{p:semisp-conv} implies that
these sets are max-min convex and do not contain $x$. Obviously,
they are not included in each other.
If $S_i(x)$ is not maximal, let $S$ be a
max-min convex set strictly containing $S_i(x)$.
Then Theorem \ref{interval-sep} implies
that there exists other $S_j(x), i\not =j,$
such that $S\subset S_j(x)$. But this implies $S_i(x)\subset S_j(x)$, a contradiction.
Hence $S_i(x)$ are all maximal and $\{S_i(x)\}_i$ is the
family of semispaces at $x$.
\end{proof}

Thus we recover a result of \cite{NS-08II}
that Definition \ref{def-sets9}
actually yields all semispaces at a given point.

We now show that separation by semispaces is
impossible when $B$
and $C$ do not satisfy
\eqref{sep-cond}.

\begin{theorem}
\label{BC:nonsep}
Suppose that
$B=[\podx_1,\nadx_1]\times\ldots\times[\podx_n,\nadx_n]$
and max-min convex set $C\subseteq\mmset^n$ are such that
$B\cap C=\emptyset$ but the condition \eqref{sep-cond} does not
hold. Then there is no semispace that contains $C$ and
avoids $B$.
\end{theorem}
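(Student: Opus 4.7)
My plan is a contradiction argument: suppose a semispace $S$ satisfies $C\subseteq S$ and $S\cap B=\emptyset$, and feed in the witness point $y\in C$ guaranteed by the failure of \eqref{sep-cond}, namely (under the standing sort $\nadx_1\geq\cdots\geq\nadx_n$) $\nadx_1=1$, $y\geq\podx$ componentwise, and $y_l>\nadx_l$ for some $l\leq t(B)$. The goal is to show $y\notin S$, contradicting $C\subseteq S$.

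First I would classify $S$. If $S=S_0(x^0)$ from \eqref{semiunu}, then avoidance of $B$ forces $\nadx_k\leq x^0_k$ for every $k$, hence $x^0_1\geq\nadx_1=1$; but Definition \ref{def-sets9} only introduces $S_0$ in parts a) and c), where $x^0$ has no coordinate equal to $1$, a contradiction. So $S=S_i(x^0)$ for some $i\geq 1$, of the form $\{x:x_i<x^0_i\text{ or }x_j>x^0_j\text{ for some }j\in J\}$ from \eqref{semidoi} or \eqref{semitrei}, with indices referring to the $x^0$-sorted coordinates. Avoidance of $B$ translates to $\podx_i\geq x^0_i$ and $\nadx_j\leq x^0_j$ for every $j\in J$. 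Plugging $y\in S$ into these: the first disjunct $y_i<x^0_i\leq\podx_i$ clashes with $y\geq\podx$, so some $j\in J$ must satisfy $y_j>x^0_j\geq\nadx_j$; the block subdivision \eqref{permut5} forces $x^0_j<x^0_i$ strictly (since $j$ lies past the equal-block at position $i$), and chaining yields the structural strict inequality $\nadx_j\leq x^0_j<x^0_i\leq\podx_i$, hence $\nadx_j<\podx_i$.

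The main obstacle is closing the argument by harvesting the hypothesis $l\leq t(B)$: the defining property $\nadx_{t(B)}\geq\podx_k$ for every $k\leq t(B)$ is exactly what should forbid the strict inequality $\nadx_j<\podx_i$ produced above. The technical core is to translate between the two orderings — the $x^0$-sort that governs $i,j$ in the semispace description and the $\nadx$-sort that governs $l$ in the hypothesis — and argue that an admissible pair $(i,j)$ with $\nadx_j<\podx_i$ is impossible once the witness $l$ is in place. The natural route is to show that $j$ must, via the block structure of $x^0$ in \eqref{permut5} together with the explicit shape of $J$ in \eqref{semidoi}--\eqref{semitrei}, be forced into the top-$t(B)$ block of the $\nadx$-sort, whereupon $\nadx_j\geq\max_{k\leq t(B)}\podx_k\geq\podx_i$ delivers the contradiction. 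I expect this index alignment across the two sort orders — not the algebra of the disjuncts — to be where the entire weight of the proof lies.
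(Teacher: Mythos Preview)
Your outline is the same as the paper's: rule out $S_0$ via $\nadx_1=1$; for $S_i(x^0)$ with $i\geq 1$, translate $S\cap B=\emptyset$ into $x^0_i\leq\podx_i$ and $x^0_j\geq\nadx_j$ for all $j\in J=\{j:x^0_j<x^0_i\}$; then use $y\in S$ together with $y\geq\podx$ to produce some $j\in J$ with $y_j>x^0_j\geq\nadx_j$, whence $\nadx_j<\podx_i$. The paper does exactly this (with your $j$ written as $k$) and then closes by arguing $\podx_i\leq\nadx_k$, a contradiction.

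The obstacle you single out is real and, as stated, cannot be overcome. The paper gets $\podx_i\leq\nadx_k$ by tacitly assuming that the witness $z$ satisfies $z_m\leq\nadx_m$ at every index except certain $k\leq t(B)$ (this is asserted in \eqref{someeq33223345}); under that extra hypothesis any index $j$ with $z_j>\nadx_j$ is automatically $\leq t(B)$, after which \eqref{maxixk} and the $\nadx$-ordering finish both cases $i<k$ and $i>k$. But the bare negation of \eqref{sep-cond} gives only $y\geq\podx$ and $y_l>\nadx_l$ for \emph{some} $l\leq t(B)$; it does not bound $y_m$ by $\nadx_m$ for $m>t(B)$. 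Concretely, take $n=3$, $\podx=(0.5,0.6,0.2)$, $\nadx=(1,0.8,0.3)$ (so $t(B)=2$) and $C=\{y\}$ with $y=(0.5,0.9,0.4)$: then $B\cap C=\emptyset$ and \eqref{sep-cond} fails via $l=2$, yet the semispace $S_1\bigl((0.5,0.5,0.35)\bigr)=\{x:x_1<0.5\text{ or }x_3>0.35\}$ contains $y$ (since $y_3=0.4>0.35$) and avoids $B$ (every $w\in B$ has $w_1\geq 0.5$ and $w_3\leq 0.3$). Here the index $j$ your argument produces is $j=3>t(B)$, so the ``index alignment'' you hoped to carry out is impossible in general; the argument only goes through under the stronger assumption on the witness that the paper's proof uses without justification.
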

\begin{proof}
We assume that $\nadx_1\geq\ldots\geq\nadx_n$.
Since \eqref{sep-cond} does not hold, we have
$\nadx_1=1$. Also there exists $z\in C$, such that
for some indices $k\le t(B)$ we have
\begin{equation}
\label{maxixk}
\max_i\{\podx_i\colon i\leq k\}\leq \nadx_k< z_k,
\end{equation}
but
\begin{equation}\label{someeq33223345}
\podx_i\leq z_i\leq \nadx_i
\end{equation}
for all the others.
Any semispace of the type
$S_0$ given by \eqref{semiunu} intersects with $B$ since
$\nadx_1=1$. If we assume by contradiction that
a separating semispace exists, then it must be of the type
$S_i(x^0)$ given by \eqref{semidoi} or \eqref{semitrei}.
Further, we claim that this semispace must contain the set
$\{x\colon x_k> x_k^0\}$ for some $k$
such that $z_k>x_k^0\geq\nadx_k$,
otherwise it either does not
contain $z$ or it intersects with $B$.
Indeed $z\in S_i(x^0)$ implies
\begin{equation}\label{8833someeq}
z_i<x_i^0,\text{ or }z_j>x_j^0\text{ for }j\text{ such that }x_i^0>x_j^0.
\end{equation}

If $z_j>x_j^0$ is true for some $j$ such that $x_j^0<\nadx_j$, then
$y\in S_i(x^0)$ for each $y\in B$ with $y_j=\nadx_j$, hence
$B\cap S_i(x^0)\neq\emptyset$, a contradiction. If $z_i<x_i^0$ is true,
then $x_i^0>\podx_i$ implying $y\in S_i(x^0)$ for each $y\in B$ with $y_i=\podx_i$,
hence again $B\cap S_i(x^0)\neq\emptyset$, a contradiction.
Thus $z_k>x_k^0$
must hold true for at least one $k$, and necessarily with
$x_k^0\geq\nadx_k$.

This also implies that $i\neq k$, for the type of the
semispace above. Then we must have $x^0_i>x_k^0\geq\nadx_k$ and
$\{x\mid x_i<x_i^0\}\subseteq S_i(x^0)$.
If $i<k$ then $x_i^0>\nadx_k\geq\podx_i$ due to \eqref{maxixk},
and if $i>k$ then also $x_i^0>\nadx_k\geq\nadx_i\geq\podx_i$ by
the ordering of
$\nadx_i$. Hence the set $\{x\colon x_i<x_i^0\}$, which
is contained in $S_i(x^0)$, intersects with $B$
and the separation is impossible.
\end{proof}

\begin{remark} A simple example of interval non-separation is shown in Figure 2.
The box is $B=[0,1]\times [a,b]$ where $0\le a\leq b<1$ and
the convex set is $C=\{z\}$ where $z=(z_1,z_2)$ with $z_2>b$.
Note that $B$ and $C$ do not satisfy condition \eqref{sep-cond}.
\end{remark}

\begin{figure}
\centering
\includegraphics[width=5.5cm]{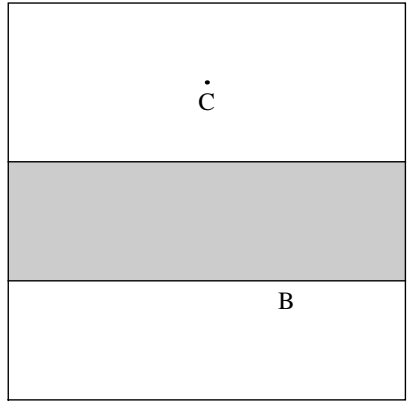}
\caption{Forbidden 2-dim interval separation of convex sets}
\end{figure}

\begin{remark} Theorem \ref{interval-sep} can be easily
modified to allow any case, if in addition to Definition
\ref{def-sets9} we also allow the sets
\begin{equation}
\label{S0m-def}
S_0^M(x^0)=\{x\colon x_i>x^0_i\ \text{for some $i\in M$}\}.
\end{equation}
By Corollary \ref{ns-08ii} these sets cannot be semispaces.
They are {\em hemispaces} in the sense that both the set and
its complement are max-min convex. The condition
$C\subseteq S_0^M(x^0)$ can be verified by the same type of
oracle as in Theorem \ref{interval-sep}.
\end{remark}

\section{Separation of two max-min convex sets}

In this section we investigate the separation of two disjoint closed max-min convex
sets by a box and by a box and a semispace.

We recall the structure of 2-dimensional max-min segments as presented in  \cite{NS-08I}.
Pictures of all types of max-min segments are shown in Figure 4, taken from  \cite{NS-08I}.

\begin{figure}
\centering
\includegraphics[width=8cm]{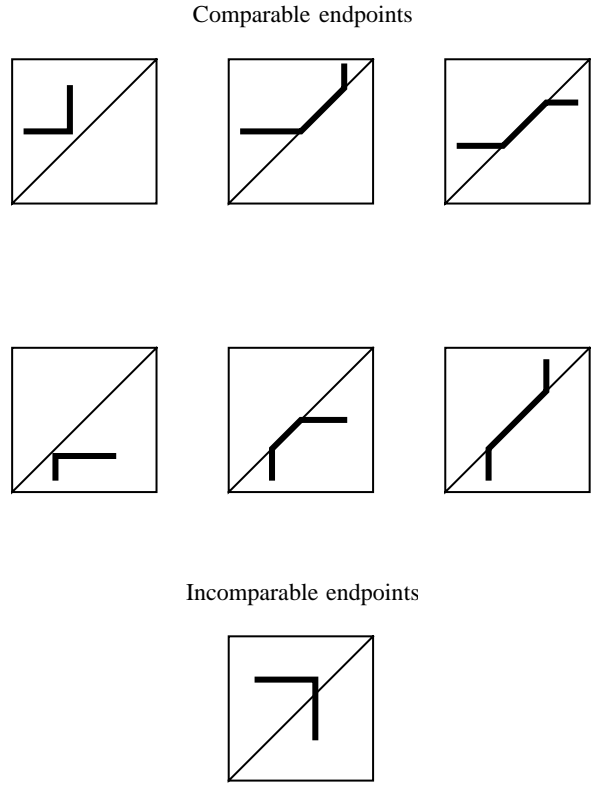}
\caption{2-dim max-min segments}
\end{figure}

\begin{theorem}\label{theor-prelim} Let
$C_1,C_2\in\mmset^2$, $C_1\cap C_2=\emptyset$, be two closed
max-min convex sets. Then there exist a permutation $i:\{1,2\}\to \{1,2\}$ and a box $B\subset \mmset^2$ such that $C_{i(1)}\subset B$ and $B\cap C_{i(2)}=\emptyset$.
\end{theorem}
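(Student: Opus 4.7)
The plan is to use the structural fact that any closed max-min convex set $C\subseteq\mmset^2$ contains its componentwise maximum $M^C=(\max_{x\in C}x_1,\max_{x\in C}x_2)$. This holds because closed max-min convex sets are closed under $\oplus$ (set $\alpha=\beta=1$ in the definition of max-min convexity): picking $p^1,p^2\in C$ that attain the two coordinatewise maxima, one has $p^1\oplus p^2=M^C\in C$. Applied to $C_1,C_2$ this gives $M^i\in C_i$ for $i=1,2$, and the disjointness assumption forces $M^1\neq M^2$.

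Denote by $B_i$ the (axis-aligned) bounding box of $C_i$. Since any box $B\subseteq\mmset^2$ containing $C_{i(1)}$ automatically contains $B_{i(1)}$, it suffices to show that one of the two bounding boxes already avoids the opposite set. I would proceed by case analysis on the order relation between $M^1$ and $M^2$. When $M^1\leq M^2$ componentwise, I claim $B_1\cap C_2=\emptyset$ and take $i(1)=1$, $B=B_1$; supposing, aiming at a contradiction, that $y\in B_1\cap C_2$, we have $y\leq M^1\leq M^2$ with $y\neq M^1$ (since $M^1\in C_1$). The max-min segment $[y,M^2]_M\subseteq C_2$ is then one of the ``staircase'' segments classified in Figure 4, because $y\leq M^2$. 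Using the explicit parameterization \eqref{segm0} of this staircase, together with $M^1\in C_1$ and the fact that the coordinate slices $C_1\cap\{x_k=M^1_k\}$ are themselves intervals in $\mmset$, I would locate a specific point on the staircase that also lies in $C_1$---contradicting $C_1\cap C_2=\emptyset$. The case $M^2\leq M^1$ is handled symmetrically with $B_2$; and when $M^1$ and $M^2$ are incomparable, one runs a parallel argument after choosing the bounding box of the set whose max point is ``lower'' in the coordinate that matters.

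The main obstacle is that the staircase $[y,M^2]_M$ does not in general pass through $M^1$ exactly, so the witness point in $C_1\cap C_2$ has to be produced indirectly. A natural candidate is the point of the segment obtained at parameter $\beta=M^1_k$ for an appropriate coordinate $k$; a further max-min combination of this point with $M^1\in C_1$ can then be used to pull the witness into $C_1$. Managing this construction uniformly across the subcases determined by the sign patterns of $M^1_1-M^1_2$, $M^2_1-M^2_2$, and $y_1-y_2$ is the delicate combinatorial core of the argument, and is where the special two-dimensional structure (and the classification of segments in Figure 4) is genuinely used---consistently with the promised counterexample in $\mmset^3$.
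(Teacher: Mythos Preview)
Your opening observation (that each closed max-min convex $C_i$ contains its componentwise maximum $M^i$) coincides with the paper's first step, but from there the two arguments diverge. The paper fixes $C_1$, takes its bounding box $B_0$, and proves a geometric partition lemma splitting $B_0$ into four max-min convex regions $T_0,T_1,T_2,T_3$ with $T_0\subseteq C_1$; if $C_2$ meets $B_0$ at all it must meet some $T_i$ with $i\geq 1$, and for each such region the paper builds an explicit box around $C_2$ that avoids $C_1$. You instead compare $M^1$ with $M^2$ and try to show directly that one of the two bounding boxes already separates.

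Your comparable case ($M^1\leq M^2$) can indeed be completed along the lines you indicate. Assuming $M^1_1\leq M^1_2$, the point $p=(M^1_1,\,y_2\vee M^1_1)$ lies on $[y,M^2]_M\subseteq C_2$ at parameter $\beta=M^1_1$; on the other hand, picking any $w\in C_1$ with $w_2\leq y_2$ (such $w$ exists because $y\in B_1$ forces $y_2\geq\min_{C_1}x_2$), one checks $p=(y_2\vee M^1_1)\wedge M^1\oplus w\in C_1$, yielding the desired contradiction.

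The genuine gap is the incomparable case, which you dismiss with ``one runs a parallel argument after choosing the bounding box of the set whose max point is lower in the coordinate that matters.'' This is precisely where the paper's work concentrates: its Case~2 (region $T_2$, where $C_2$ sits above the diagonal inside $B_0$) first has to prove the non-obvious inequality $y_M>x_M$ for certain extremal coordinates of $C_2\cap T_2$, and only then constructs the separating box $[0,x_M]\times[y_M,1]$. Your sketch neither identifies which bounding box to take when $M^1,M^2$ are incomparable nor explains why a single segment $[y,M^2]_M$ still suffices to manufacture a point of $C_1\cap C_2$; the phrase ``the coordinate that matters'' hides exactly the case distinction that the paper's $T_1/T_2/T_3$ partition makes explicit. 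Until that case is written out, the proposal is a plausible outline rather than a proof.
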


\begin{proof} Let
\begin{equation}
\begin{gathered}
x_c:=\max\{x\vert (x,y)\in C_1\text{ for some }y\},\\
y_c:=\max\{y\vert (x,y)\in C_1\text{ for some }x\}.
\end{gathered}
\end{equation}
As $C_1$ is compact, there exist $(x_c,y), (x,y_c)\in C_1$, and the convexity of $C_1$ implies that
\begin{equation}
c:=(x_c,y_c)=(x_c,y)\oplus (x,y_c)\in C_1.
\end{equation}

Let
\begin{equation}
\begin{gathered}
x_a:=\min\{x\vert (x,y)\in C_1\text{ for some }y\},\\
y_b:=\min\{y\vert (x,y)\in C_1\text{ for some }x\}.
\end{gathered}
\end{equation}

Consider the points in $C_1$, guaranteed again by compactness:
\begin{equation}
\begin{gathered}
a:=(x_a,y_a),\\
b:=(x_b,y_b).
\end{gathered}
\end{equation}
The values $y_a$ and $x_b$ are chosen arbitrarily.

The smallest box in $\mmset^2$ containing the convex set
$C_1$ is $B_0:=[x_a,x_c]\times [y_b,y_c]$. The point $c$ is the upper right corner of $B_0$.

We need the following Lemma, which can be proved by drawing all possible
special cases and using the structure of max-min segments shown on Figure 4.
This proof is routine and will be omitted.

\begin{lemma}
\label{l:4regions}
The box $B_0$ can be partitioned as $B_0=T_0\cup T_1\cup T_2\cup T_3$,
where
\begin{equation*}
\begin{split}
T_0&=\{\alpha\wedge a\oplus\beta\wedge b\oplus\gamma\wedge c\colon \alpha\oplus\beta\oplus\gamma=1\},\\
T_1&=B_0\cap\{(x,y)\colon x<x_b,\;y<y_a\},\\
T_2&=B_0\cap\{(x,y)\colon y>y_a,\;x<x_c,\;y>x\},\\
T_3&=B_0\cap\{(x,y)\colon x>x_b,\;y<y_c,\;y<x\}.
\end{split}
\end{equation*}
All regions $T_0,T_1,T_2,T_3$ are max-min convex (or possibly empty).
\end{lemma}

The regions $T_0,T_1,T_2,T_3$ are shown in Figure 5.

\begin{figure}
\centering
\includegraphics[width=7cm]{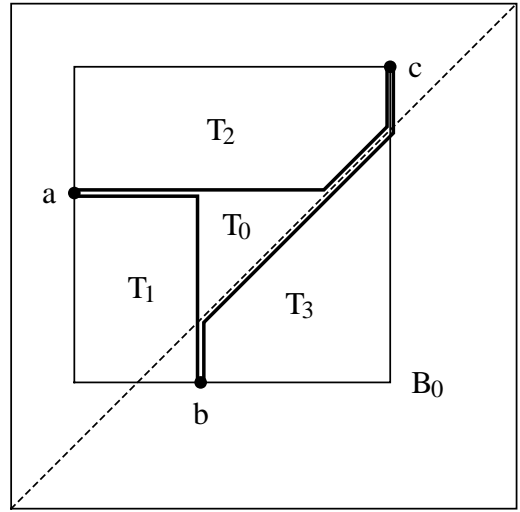}
\caption{2-dim box separation}
\end{figure}

Evidently $T_0\subseteq C_1$ (note that $T_0$ is the
max-min convex hull of $a,b,c$). In particular,
the max-min segments $[a,b]_M,[a,c]_M,[b,c]_M$ are included
in $C_1$ and any point from $C_2$ stays away from them. The other regions
may contain points from both $C_1$ and $C_2$.

We show that if the convex set $C_2$ intersects with
one of the regions $T_1$ and $T_2$, then there is
a box $B_1\subset \mmset^2$
such that $C_2\subset B_1$ and $B_1\cap C_1=\emptyset$.
Due to the symmetry about the main
diagonal, there is no need to consider the case where $C_2$ intersects with $T_3$.

{\em Case 1.} Assume $C_2$ intersects with the region $T_1$.

The intersection $C_2'$ of $C_2$ with
region $T_1$ is max-min convex (as the intersection of
two max-min convex sets). Thus there exists a point
$(x_M,y_M)\in C'_2$, away from the boundary of $C_1$, and
consequently away from the segment $[a,b]_M$, that has the maximum $x$-coordinate and maximum $y$-coordinate for $C_2'$.

We show that $C_2$ is included in the box $B_1=[0,x_M]\times [0,y_M]$.

Assume by contradiction that
there exists $(x',y')\in C_2$ such that $x'>x_M$ or $y'>y_M$, then
$(x'',y''):=(x_M,y_M)\oplus(x',y')$ has either $x''=x_M$ and $y''=y'> y_M$,
or $x''=x'> x_M$ and $y''=y_M$, or $(x'',y'')=(x',y')$.
If $x''<x_b$ and $y''<y_a$ then $(x'',y'')\in C_2'$, which contradicts
the maximality of $x_M$ and $y_M$. Otherwise, the segment
$[(x_M,y_M),(x'',y'')]_M$ intersects with $[a,b]_M$ and hence
$C_1\cap C_2\neq\emptyset$, a contradiction.

We show that the box $B_1$ does not intersect with $C_1$.
Assume that there exists $(x,y)\in B_1\cap C_1$. Then $x\leq x_M$ and $y\leq y_M$.
There exist $(x',y_M)\in[a,b]_M$ and $(x_M,y')\in[a,b]_M$
such that $x'>x_M$ and $y'>y_M$. Using these points, we obtain that
\begin{equation}
\begin{split}
(x_M,y_M)&=(x,y)\oplus x_M\wedge (x',y_M),\ \text{ if } x_M\geq y_M\\
(x_M,y_M)&=(x,y)\oplus y_M\wedge (x_M,y'),\ \text{ if } x_M\leq y_M.
\end{split}
\end{equation}
In both cases $(x_M,y_M)\in C_1$ and hence $C_1\cap C_2\neq\emptyset$,
a contradiction.

{\em Case 2.} Assume now that $C_2$ intersects with $T_2$,
and let $C_2':=C_2\cap T_2$. Let $x_M$ be the largest $x$ coordinate
of a point in $C_2'$ and $y_M$ the lowest $y$-coordinate of a point in $C_2'$. Let $(x_0,y_M), (x_M,y_0)\in C'_2$.
From the definition of $T_2$ we have $x_0\le y_M$ and $x_M\le y_0$. Let $[t_1,t_2]:=[x_a,x_c]\cap[y_a,y_c]$
(where all segments are ordinary
on the real line).

If $y_M\leq x_M$, then due to convexity
\begin{equation}
\begin{gathered}
(x_0,x_0)=(x_0,y_M)\oplus x_0\wedge (x_M,y_0)\in C_2',\\
(y_M,y_M)=(x_0,y_M)\oplus y_M\wedge (x_M,y_0)\in C_2',
\end{gathered}
\end{equation}
and hence the whole diagonal (and max-min) segment $[(x_0,x_0),(y_M,y_M)]_M$
is included in $C_2'$.
It can be observed that any point in the closure of $T_2$ that
belongs to the main diagonal lies in $[a,c]_M$ which is in $C_1$.
Thus $C_1\cap C_2\neq\emptyset$, a contradiction,
hence we must have $y_M>x_M$.

When $y_M>x_M$, due to convexity we have
\begin{equation}
(x_M,y_M)=(x_0,y_M)\oplus y_M\wedge (x_M,y_0)\in C'_2.
\end{equation}
In this case we claim that $C_2$ is contained in the box
$B_1:=[0,x_M]\times [y_M,1]$, which avoids $C_1$.

Assume by contradiction that there exists $(x',y')\in C_2$ which does
not lie in $B_1$. This implies that $x'>x_M$ or $y'<y_M$.
We also have
$y_M>x'$ and $y'>x_M$, otherwise the segment
$[(x',y'),(x_M,y_M)]_M$ has points on the main diagonal, in which
case it intersects with $[a,c]_M$. Consider the combinations
\begin{equation}
\begin{split}
&(x',y_M)=y_M\wedge(x',y')\oplus (x_M,y_M),\ \text{ if } x'>x_M,\\
&(x_M,y')=(x',y')\oplus y'\wedge (x_M,y_M),\ \text{ if } y'<y_M \text{ and }x'\leq x_M.
\end{split}
\end{equation}
Thus we obtain either $(x',y_M)\in C_2$ with $x'>x_M$, or
$(x_M,y')\in C_2$ with $y'<y_M$ and $x'\leq x_M$, leading to
a contradiction with the maximality of $x_M$ or the minimality of
$y_M$.

To prove that $B_1$ avoids $C_1$, assume by contradiction
that there exists $(x,y)\in C_1$ where $x\leq x_M$ and $y\geq y_M$.
We observe that there is a point
$(x_M,y')\in [a,c]_M$, where $y'\leq y_M$. Using this point we obtain
\begin{equation}
(x_M,y_M)=y_M\wedge(x,y)\oplus (x_M,y'),
\end{equation}
which implies $(x_M,y_M)\in C_1$, hence $C_1\cap C_2\neq\emptyset$,
a contradiction.
\end{proof}

\begin{theorem}\label{theorem200} Let
$C_1,C_2\in\mmset^2$, $C_1\cap C_2=\emptyset$, be two closed
max-min convex sets that are away from the boundary of $\mmset^2$.
Then there exist a permutation $i:\{1,2\}\to \{1,2\}$,
a box $B\subset \mmset^2$ and a
semispace $S\subset \mmset^2$ such
that $C_{i(1)}\subset B,C_{i(2)}\subset S$
and $B\cap S=\emptyset$.
\end{theorem}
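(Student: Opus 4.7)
The plan is to combine Theorem \ref{theor-prelim} with Theorem \ref{interval-sep}. First, I apply Theorem \ref{theor-prelim} to obtain a permutation $i:\{1,2\}\to\{1,2\}$ and a box $B_0\subset\mmset^2$ with $C_{i(1)}\subset B_0$ and $B_0\cap C_{i(2)}=\emptyset$. I then shrink $B_0$ down to the smallest box $B$ enclosing $C_{i(1)}$,
\begin{equation*}
B:=\Bigl[\min_{(x,y)\in C_{i(1)}}x,\;\max_{(x,y)\in C_{i(1)}}x\Bigr]\times\Bigl[\min_{(x,y)\in C_{i(1)}}y,\;\max_{(x,y)\in C_{i(1)}}y\Bigr],
\end{equation*}
which is well-defined since $C_{i(1)}$ is compact. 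Then $C_{i(1)}\subset B\subseteq B_0$, so $B\cap C_{i(2)}=\emptyset$; moreover, as $C_{i(1)}$ lies away from the boundary of $\mmset^2$, we obtain $B\subset(0,1)^2$, so that, writing $B=[\podx_1,\nadx_1]\times[\podx_2,\nadx_2]$, we have $\nadx_1,\nadx_2<1$ and $\podx_1,\podx_2>0$.

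Next I apply Theorem \ref{interval-sep} to the pair $(B,C_{i(2)})$. Ordering the coordinates so that $\nadx_1\geq\nadx_2$, the premise of condition \eqref{sep-cond} demands $\nadx_1=1$, which fails by construction; hence \eqref{sep-cond} holds vacuously. Theorem \ref{interval-sep} therefore produces a semispace $S$ from Definition \ref{def-sets9} with $C_{i(2)}\subset S$ and $B\cap S=\emptyset$. Combined with $C_{i(1)}\subset B$, this is the desired separation.

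The step I expect to be the crux is the shrinking from $B_0$ to $B\subset(0,1)^2$: this is the only place where the \emph{away from the boundary} hypothesis on $C_1$ and $C_2$ is actually used, and it is precisely what is needed to reduce condition \eqref{sep-cond} to a vacuous statement. Once that reduction is in hand, the result is an immediate composition of the two earlier theorems; I do not anticipate any subtle technical difficulty, and since both Theorems \ref{theor-prelim} and \ref{interval-sep} are constructive, so is the present argument.
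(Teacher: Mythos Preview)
Your proposal is correct and follows essentially the same approach as the paper: apply Theorem~\ref{theor-prelim}, pass to the minimal enclosing box of $C_{i(1)}$ (which lies in $(0,1)^2$ by the boundary hypothesis, so that \eqref{sep-cond} is vacuous), and then invoke Theorem~\ref{interval-sep}. The paper's proof is slightly terser in that it does not spell out the shrinking from $B_0$ to $B$, but the logic is identical.
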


\begin{proof} The statement follows from Theorem \ref{interval-sep}
and Theorem \ref{theor-prelim}. Indeed, Theorem \ref{theor-prelim}
implies that either the minimal containing box of $C_1$ does not
intersect with $C_2$, or the minimal containing box of $C_2$ does not
intersect with $C_1$.
The condition \eqref{sep-cond} is satisfied due to the
fact that the convex sets are away from the boundary of $\mmset^n$
and hence so are the minimal containing boxes. Applying
Theorem \ref{interval-sep} we obtain the statement.
\end{proof}

\begin{figure}
\centering
\includegraphics[width=6cm]{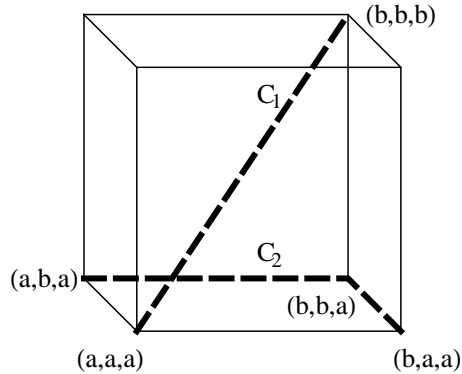}
\caption{Forbidden 3-dim box-semispace separation}
\end{figure}

\begin{remark} We observe that Theorem \ref{theor-prelim} and Theorem \ref{theorem200} are not valid in dimension 3 or higher.

Let $C_1$ be the max-min segment
$[(a,a,a),(b,b,b)]_M$ and $C_2$ be the max-min segment
$[(b,a,a),(a,b,a)]_M$, where $0\leq a<b\leq 1$.
It follows from \cite{NS-08I} that
$C_1$ is part of the main diagonal, and that $C_2$ is the concatenation of two pieces with parametrizations
$\{(t,b,a)\vert a\le t\le b\}$ and $\{(b,t,a)\vert a\le t\le b\}$.
It follows from Figure 6 that the smallest box
containing $C_1$ is $[a,b]^3$
and the smallest box containing $C_2$ is $[a,b]^2\times \{a\}$. Since one box is completely included in the other, box separation or box-semispace separation of $C_1$ and $C_2$ is not possible.
\end{remark}

\end{document}